\documentclass[11pt]{article}
\usepackage{amssymb}
\oddsidemargin   = 0 cm \evensidemargin  = 0 cm \textwidth  = 14
cm \textheight = 22 cm \headheight=0cm \topskip=0cm \topmargin=0cm
\newtheorem{precor}{{\bf Corollary}}

\newtheorem{precon}{{\bf Conjecture}}

\newtheorem{prealphcon}{{\bf Conjecture}}

\newtheorem{predefin}{{\bf Definition}}

\newtheorem{preexm}{{\bf Example}}

\newtheorem{preappl}{{\bf Application}}

\newtheorem{prelem}{{\bf Lemma}}

\newtheorem{preproof}{{\bf Proof.\ }}

\newenvironment{proof}[1]{\begin{preproof}{\rm
               #1}\hfill{$\blacksquare$}}{\end{preproof}}
\newtheorem{pretheorem}{{\bf Theorem}}

\newenvironment{theorem}{\begin{pretheorem}{\hspace{-0.5
               em}{\bf.\ }}}{\end{pretheorem}}
\newtheorem{prealphtheorem}{{\bf Theorem}}

\newtheorem{prealphlem}{{\bf Lemma}}

\newtheorem{prepro}{{\bf Proposition}}

\newtheorem{preprb}{{\bf Problem}}

\newtheorem{prerem}{{\bf Remark}}

\newtheorem{preapp}{{\bf Application}}

\newtheorem{prequ}{{\bf Question}}

%

\def\conct[#1,#2]{\mbox {${#1} \leftrightarrow {#2}$}}
\def\dconct[#1,#2]{\mbox {${#1} \rightarrow {#2}$}}
\def\deg[#1,#2]{\mbox {$d_{_{#1}}(#2)$}}
\def\mindeg[#1]{\mbox {$\delta_{_{#1}}$}}
\def\maxdeg[#1]{\mbox {$\Delta_{_{#1}}$}}
\def\outdeg[#1,#2]{\mbox {$d_{_{#1}}^{^+}(#2)$}}
\def\minoutdeg[#1]{\mbox {$\delta_{_{#1}}^{^+}$}}
\def\maxoutdeg[#1]{\mbox {$\Delta_{_{#1}}^{^+}$}}
\def\indeg[#1,#2]{\mbox {$d_{_{#1}}^{^-}(#2)$}}
\def\minindeg[#1]{\mbox {$\delta_{_{#1}}^{^-}$}}
\def\maxindeg[#1]{\mbox {$\Delta_{_{#1}}^{^-}$}}

\def\dre[#1,#2,#3]{\mbox {${\cal E}^{^{#3}}(#1,#2)$}}
\def\var[#1,#2]{\mbox {${\rm Var}_{_{#1}}(#2)$}}
\def\ls[#1]{\mbox {$\xi^{^{#1}}$}}
\def\hom[#1,#2]{\mbox {${\rm Hom}({#1},{#2})$}}
\def\onvhom[#1,#2]{\mbox {${\rm Hom^{v}}(#1,#2)$}}
\def\onehom[#1,#2]{\mbox {${\rm Hom^{e}}(#1,#2)$}}
\def\core[#1]{\mbox {$#1^{^{\bullet}}$}}
\def\cay[#1,#2]{\mbox {${\rm Cay}({#1},{#2})$}}
\def\sch[#1,#2,#3]{\mbox {${\rm Sch}({#1},{#2},{#3})$}}
\def\cays[#1,#2]{\mbox {${\rm Cay_{s}}({#1},{#2})$}}
\def\dirc[#1]{\mbox {$\stackrel{\rightarrow}{C}_{_{#1}}$}}
\def\cycl[#1]{\mbox {${\bf Z}_{_{#1}}$}}

\begin{document}

\begin{center} 
{\Large \bf Regarding Equitable Colorability Defect of Hypergraphs}\\
\vspace{0.3 cm}
{\bf Saeed Shaebani}\\
{\it School of Mathematics and Computer Science}\\
{\it Damghan University}\\
{\it P.O. Box {\rm 36716-41167}, Damghan, Iran}\\
{\tt shaebani@du.ac.ir}\\ \ \\
\end{center}
\begin{abstract}
\noindent Azarpendar and Jafari in 2020 proved the following inequality
$$\chi \left(  {\rm KG} ^r ({\cal F} , s)  \right) \geq \left\lceil  \frac{ {\rm ecd}^r \left(  {\cal F} , \left\lfloor  \frac{s}{2}   \right\rfloor \right) }{r-1}  \right\rceil ,$$
and noted that it is plausible that the above inequality remains true if one replaces $\left\lfloor  \frac{s}{2}   \right\rfloor$ with $s$.

\noindent In this paper, considering the relation 
${\rm ecd}^r \left(  {\cal F} , x \right) \geq {\rm cd}^r \left(  {\cal F} , x   \right)$ which always holds, we show that even in the weaker inequality
$$\chi \left(  {\rm KG} ^r ({\cal F} , s)  \right) \geq \left\lceil  \frac{ {\rm cd}^r \left(  {\cal F} , \left\lfloor  \frac{s}{2}   \right\rfloor \right) }{r-1}  \right\rceil ,$$
no number $x$ greater than $\left\lfloor  \frac{s}{2}   \right\rfloor$ could be replaced by $\left\lfloor  \frac{s}{2}   \right\rfloor$.
\\

\noindent {\bf Keywords:}\ {Hypergraph, Generalized Kneser Hypergraph, Chromatic Number, Colorability Defect, Equitable Colorability Defect.}\\

\noindent {\bf Mathematics Subject Classification : 05C15, 05C65}
\end{abstract}
\section{Introduction}

A {\it hypergraph} ${\cal F}$ consists of a finite set $V( {\cal F} )$, together with a subset of $2^{V ( {\cal F} )} \setminus \{\varnothing\}$ which is denoted by $E( {\cal F} )$.
Any member of $V( {\cal F} )$ is called a {\it vertex} of $ {\cal F} $, and members of $E( {\cal F} )$ are called {\it hyperedges} of $ {\cal F} $.
The hypergraph ${\cal F}$ is called $r$-{\it uniform} whenever $|e|=r$ for each hyperedge $e$ of ${\cal F}$.

Let ${\cal F}$ be an arbitrary (uniform or nonuniform) hypergraph and $r \in \{2,3,4, \dots\}$. If $S$ is a nonnegative integer such that $s < |e|$ for each hyperedge $e$ of ${\cal F}$, then the {\it generalized Kneser Hypergraph} ${\rm KG} ^r ({\cal F} , s)$ is defined as an $r$-uniform hypergraph with vertex set $V \left(  {\rm KG} ^r ({\cal F} , s)  \right) := E( {\cal F} )$ in such a way that $r$ hyperedges $e_{1}, e_{2}, \dots , e_{r}$
of ${\cal F}$ form a hyperedge of ${\rm KG} ^r ({\cal F} , s)$ whenever
$\left|   e_{i} \cap e_{j}   \right| \leq s$ for all distinct indices $i$ and $j$ in $\{1,2, \dots , r\}$. As a definition, the {\it chromatic number} of ${\rm KG} ^r ({\cal F} , s)$, denoted by $\chi \left(  {\rm KG} ^r ({\cal F} , s)  \right)$, is the minimum cardinality of a set $C$ for which a function $f: V \left(  {\rm KG} ^r ({\cal F} , s)  \right) \longrightarrow C$ exists in such a way that
$\left|  \left\{ f\left(e_{1}\right) ,  f\left(e_{2}\right) , \dots ,  f\left(e_{r}\right) \right\} \right| \geq 2$ for each hyperedge
$\left\{ e_{1} ,  e_{2} , \dots ,  e_{r} \right\}$ of
${\rm KG} ^r ({\cal F} , s)$.

For nonnegative integers $n$ and $k$, let the symbols $[n]$ and ${[n] \choose k}$
denote the following sets :
$$\begin{array}{lll}
	[n]:=\{1,2, \dots , n\};  &  {\rm and}   &   {[n] \choose k} := \{A: A\subseteq [n] \ {\rm and} \ |A|=k\}. \\
\end{array}$$
If $n$ is a positive integer and $k$ is a nonnegative integer, then the {\it complete $k$-uniform hypergraph} $K_{n}^{k}$, is a hypergraph with $V \left( K_{n}^{k} \right) := [n]$ and $E \left( K_{n}^{k} \right) := {[n] \choose k}$.

One can easily observe that 
$\chi \left(  {\rm KG}^2 \left(K_{2n+k}^n , 0 \right)  \right)  \leq k+2$
and
$\chi \left(  {\rm KG}^r \left(K_{n}^k , 0 \right)  \right)  \leq \left\lceil \frac{n-r(k-1)}{r-1} \right\rceil$.
In 1955, Kneser \cite{MR0068536} conjectured that
$\chi \left(  {\rm KG}^2 \left(K_{2n+k}^n , 0 \right)  \right)  = k+2$.
This conjecture was settled by Lov{\'a}sz \cite{MR514625} in 1978. Later, in 1986, Alon, Frankl, and Lov{\'a}sz \cite{MR857448} proved a conjecture of Erd{\H{o}}s \cite{MR0465878}
which asserts that if $n\geq r(k-1)+1$ then
$\chi \left(  {\rm KG}^r \left(K_{n}^k , 0 \right)  \right)  = \left\lceil \frac{n-r(k-1)}{r-1} \right\rceil$.

If $A$ and $B$ are two sets and $s$ is a nonnegative integer, then as a notation, we write $A \subseteq _{s} B$ whenever $|A \setminus B| \leq s$.

\noindent Let ${\cal F}$ be a hypergraph, and $r$ and $s$ be nonnegative integers such that $r\geq 2$ and $s < |e|$ for each hyperedge $e$ of ${\cal F}$. As a definition, the
{\it $s$-th $r$-colorability defect} of ${\cal F}$, denoted by ${\rm cd}^r ({\cal F} , s)$, is equal to the minimum size of a subset $X_{0}$ of $V({\cal F})$ for which
a partition $\{X_1 , X_2 , \dots , X_r\}$ of $V({\cal F}) \setminus X_0$ exists such that :
\begin{center}
	If $e \in E({\cal F})$ and $X_i \in \{X_1 , X_2 , \dots , X_r\}$, then $e \nsubseteq _s X_i.$
\end{center}
We note that in this definition, some of $X_1 , X_2 , \dots , X_r$ may be equal to the empty set.

\noindent Dolnikov \cite{MR953021} and K{\v{r}}{\'{\i}}{\v{z}} \cite{MR1081939, 10.2307/118094} proved that for any hypergraph ${\cal F}$ and each integer $r\geq 2$ we have
$$\chi \left(  {\rm KG} ^r ({\cal F} , 0)  \right) \geq \left\lceil  \frac{ {\rm cd}^r \left(  {\cal F} , 0 \right) }{r-1}  \right\rceil .$$

\noindent It is easily seen that
${\rm cd}^2 \left(K_{2n+k}^n , 0 \right)  = k+2$.
Also, it is evident that if $n\geq r(k-1)+1$ then
${\rm cd}^r \left(K_{n}^k , 0 \right)  = n-r(k-1)$.
Therefore, the theorem of Dolnikov and K{\v{r}}{\'{\i}}{\v{z}} is a generalization of theorems of Lov{\'a}sz \cite{MR514625} and Alon, Frankl, and Lov{\'a}sz \cite{MR857448}.

If ${\cal F}$ is a hypergraph
and $r$ and $s$ are nonnegative integers such that $r \geq 2$ and $s < |e|$ for each hyperedge $e$ of ${\cal F}$, then the {\it $s$-th equitable $r$-colorability defect} of ${\cal F}$,
denoted by ${\rm ecd} ^r ({\cal F} , s)$, is defined as the minimum cardinality of a subset $X_0$ of $V({\cal F})$ in such a way that a partition
$\{X_1 , X_2 , \dots , X_r\}$ of $V({\cal F}) \setminus X_0$ with the two following properties exists :
\begin{itemize}
	\item If $1 \leq i < j \leq r$ then $\left| \left|X_i\right| - \left|X_j\right| \right| \leq 1;$
	\item If $e \in E({\cal F})$ and $X_i \in \{X_1 , X_2 , \dots , X_r\}$, then $e \nsubseteq _s X_i.$
\end{itemize}
We note that in this definition, some of $X_1 , X_2 , \dots , X_r$ could be equal to the empty set. Obviously, ${\rm ecd} ^r ({\cal F} , s) \geq {\rm cd} ^r ({\cal F} , s)$.
As a generalization of the theorem of Dolnikov and K{\v{r}}{\'{\i}}{\v{z}}, it was proved by Abyazi Sani and Alishahi \cite{ABYAZISANI2018229} that the relation
$$\chi \left(  {\rm KG} ^r ({\cal F} , 0)  \right) \geq \left\lceil  \frac{ {\rm ecd}^r \left(  {\cal F} , 0 \right) }{r-1}  \right\rceil $$
always holds.

Azarpendar and Jafari \cite{https://doi.org/10.48550/arxiv.2009.05969} as a generalization of many earlier results \cite{ABYAZISANI2018229, MR857448, AZARPENDAR2021372, MR953021, MR1081939, 10.2307/118094, MR514625} proved the following theorem.
\begin{theorem} \cite{https://doi.org/10.48550/arxiv.2009.05969}
	Let ${\cal F}$ be a hypergraph. If $r$ and $s$ are nonnegative integers such that
	$r\geq 2$ and $s < |e|$ for each hyperedge $e$ of {\cal F}, then
	$$\chi \left(  {\rm KG} ^r ({\cal F} , s)  \right) \geq \left\lceil  \frac{ {\rm ecd}^r \left(  {\cal F} , \left\lfloor  \frac{s}{2}   \right\rfloor \right) }{r-1}  \right\rceil .$$
\end{theorem}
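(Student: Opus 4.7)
My plan is to follow the topological framework for chromatic lower bounds of generalized Kneser hypergraphs, adapting the approach of Dolnikov--K{\v{r}}{\'{\i}}{\v{z}} (case $s=0$) and Abyazi Sani--Alishahi (case $s=0$ with the equitable defect) to the almost-containment relation $\subseteq_{\lfloor s/2 \rfloor}$. Write $n = |V({\cal F})|$, identify $V({\cal F}) = [n]$, put $m = {\rm ecd}^r({\cal F}, \lfloor s/2 \rfloor)$, and suppose for contradiction that there is a proper coloring $c: E({\cal F}) \to [C]$ of ${\rm KG}^r({\cal F},s)$ with $(r-1)C < m$. The aim is to produce from $c$ a monochromatic hyperedge, contradicting its properness.

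First, I would work with configurations $x \in (\mathbb{Z}_r \cup \{0\})^n$, identifying each $x$ with the ordered tuple $(A_0(x), A_1(x), \ldots, A_r(x))$ where $A_j(x) = \{i \in [n] : x_i = j\}$. The cyclic group $\mathbb{Z}_r$ acts by permuting the nonzero labels $1, \ldots, r$. I restrict attention to \emph{equitable} configurations where $\bigl||A_i(x)| - |A_j(x)|\bigr| \leq 1$ for all $i,j \in \mathbb{Z}_r$, satisfying the extra constraint $|A_0(x)| < m$. For such $x$, the partition $(A_1(x), \ldots, A_r(x))$ of $V({\cal F}) \setminus A_0(x)$ would witness ${\rm ecd}^r({\cal F}, \lfloor s/2 \rfloor) \leq |A_0(x)| < m$, contradicting minimality of $m$, unless some pair $(j, e)$ with $j \in \mathbb{Z}_r$ and $e \in E({\cal F})$ satisfies $e \subseteq_{\lfloor s/2 \rfloor} A_j(x)$. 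Choosing such a pair $(j(x), e(x))$ by a $\mathbb{Z}_r$-equivariant rule (e.g.\ fixing a total order on $E({\cal F})$, taking the lex-minimal $e$ and the corresponding $j$), I define a labeling $\lambda(x) = (j(x), c(e(x))) \in \mathbb{Z}_r \times [C]$ that is $\mathbb{Z}_r$-equivariant in the first coordinate and invariant in the second.

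Next, I would invoke an equitable $\mathbb{Z}_r$-Tucker type lemma in the spirit of Abyazi Sani--Alishahi. Under the hypothesis $(r-1)C < m$, the alphabet size $rC$ turns out to be small enough that the lemma forces the existence of equitable configurations $x^{(1)}, \ldots, x^{(r)}$ whose supports are nested in a controlled fashion and whose labels satisfy $\lambda(x^{(k)}) = (k, \ell)$ for some common color $\ell \in [C]$. The nesting is engineered so that $A_1(x^{(1)}), \ldots, A_r(x^{(r)})$ are pairwise disjoint. This yields hyperedges $e_1, \ldots, e_r \in E({\cal F})$ with $c(e_k) = \ell$ and $e_k \subseteq_{\lfloor s/2 \rfloor} A_k(x^{(k)})$.

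The concluding step is the set-theoretic inequality that justifies the division by $2$: for $i \neq j$, disjointness of $A_i(x^{(i)})$ and $A_j(x^{(j)})$ implies $e_i \cap e_j \subseteq (e_i \setminus A_i(x^{(i)})) \cup (e_j \setminus A_j(x^{(j)}))$, so
\[ |e_i \cap e_j| \;\leq\; |e_i \setminus A_i(x^{(i)})| + |e_j \setminus A_j(x^{(j)})| \;\leq\; 2\lfloor s/2 \rfloor \;\leq\; s. \]
Since $|e_k| > s \geq 2\lfloor s/2 \rfloor$, each $e_k$ lies mostly in its own $A_k(x^{(k)})$, so the $e_k$ are pairwise distinct. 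Hence $\{e_1, \ldots, e_r\}$ is a hyperedge of ${\rm KG}^r({\cal F}, s)$, monochromatic under $c$, contradicting properness. The main obstacle I anticipate is formulating and verifying the correct equitable $\mathbb{Z}_r$-Tucker lemma, particularly for composite $r$ where Ziegler's standard $\mathbb{Z}_p$-Tucker lemma does not directly apply; a secondary technicality is arranging the tie-breaking in the definition of $\lambda$ so that $(j(x), e(x))$ transforms equivariantly.
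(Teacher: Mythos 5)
You should first be aware that this paper does not prove Theorem 1 at all: it is quoted from Azarpendar and Jafari \cite{https://doi.org/10.48550/arxiv.2009.05969} as background, and the body of the paper is devoted to showing that the bound becomes false when $\left\lfloor s/2 \right\rfloor$ is replaced by anything larger. So there is no in-paper proof to compare your proposal against; it can only be judged on its own terms and against the cited literature.

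On those terms, your overall architecture is the standard one, and your concluding computation is exactly right: the inequality $|e_i \cap e_j| \leq |e_i \setminus A_i| + |e_j \setminus A_j| \leq 2\lfloor s/2 \rfloor \leq s$ for disjoint $A_i$, $A_j$ is precisely why the theorem carries $\lfloor s/2 \rfloor$ rather than $s$ (and Section 2 of this paper shows that this loss is unavoidable). Your tie-breaking is also unproblematic, since the witness index $j(x)$ is in fact unique for a given $e$: if $e \subseteq_{\lfloor s/2\rfloor} A_j$ and $e \subseteq_{\lfloor s/2\rfloor} A_{j'}$ with $A_j \cap A_{j'} = \varnothing$, then $|e| \leq 2\lfloor s/2\rfloor \leq s < |e|$, a contradiction. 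The genuine gap is exactly where you flag it: the ``equitable $\mathbb{Z}_r$-Tucker type lemma'' is never stated, and all of the difficulty of the theorem is hidden inside it. Concretely: (i) restricting the domain to equitable configurations destroys the combinatorial/simplicial structure on which Tucker--Ky Fan arguments run, and Abyazi Sani--Alishahi do not simply restrict the domain --- they label all of $(\mathbb{Z}_p \cup \{0\})^n \setminus \{0\}$ and encode equitability through a carefully chosen family of orderings; (ii) your labeling $\lambda$ is only defined on configurations with $|A_0(x)| < m$, whereas any Tucker-type lemma requires a label (typically alternation-based) on the complementary ``small'' configurations, and it is the interaction of the two regimes that yields the bound $(r-1)C \geq m$; (iii) for composite $r$ no $\mathbb{Z}_r$-Tucker lemma exists, and the known proofs work at primes and then induct on the number of prime factors of $r$ --- a reduction you would need to carry out for ${\rm ecd}^r$ and check is compatible with the equitability constraint. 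As written, the proposal is a plausible roadmap rather than a proof.
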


Azarpendar and Jafari in \cite{https://doi.org/10.48550/arxiv.2009.05969} noted that it is plausible that the above theorem remains true if one replaces $\left\lfloor  \frac{s}{2}   \right\rfloor$ with $s$.

\noindent In this paper, as our first result, we show that the inequality
$$\chi \left(  {\rm KG} ^r ({\cal F} , s)  \right) \geq \left\lceil  \frac{ {\rm ecd}^r \left(  {\cal F} , s \right) }{r-1}  \right\rceil $$
does not hold in general.

We know that
$${\rm ecd}^r \left(  {\cal F} , 0 \right) \leq
{\rm ecd}^r \left(  {\cal F} , 1 \right) \leq
\dots \leq
{\rm ecd}^r \left(  {\cal F} , \left\lfloor  \frac{s}{2}   \right\rfloor \right) \leq
\dots \leq
{\rm ecd}^r \left(  {\cal F} , s \right) .
$$
One may ask whether the inequality
$$\chi \left(  {\rm KG} ^r ({\cal F} , s)  \right) \geq \left\lceil  \frac{ {\rm ecd}^r \left(  {\cal F} , \left\lfloor  \frac{s}{2}   \right\rfloor \right) }{r-1}  \right\rceil $$
is still true if we put some other values larger than $\left\lfloor  \frac{s}{2}   \right\rfloor$ instead of $\left\lfloor  \frac{s}{2}   \right\rfloor$.
In order to answer this natural question, we consider the relation
 $${\rm cd}^r \left(  {\cal F} , 0 \right) \leq
 {\rm cd}^r \left(  {\cal F} , 1 \right) \leq
 \dots \leq
 {\rm cd}^r \left(  {\cal F} , \left\lfloor  \frac{s}{2}   \right\rfloor \right) \leq
 \dots \leq
 {\rm cd}^r \left(  {\cal F} , s \right) ,
 $$
and also the relation
${\rm ecd}^r \left(  {\cal F} , x \right) \geq {\rm cd}^r \left(  {\cal F} , x   \right)$ which always holds. As our second result, we show that even in the weaker inequality
$$\chi \left(  {\rm KG} ^r ({\cal F} , s)  \right) \geq \left\lceil  \frac{ {\rm cd}^r \left(  {\cal F} , \left\lfloor  \frac{s}{2}   \right\rfloor \right) }{r-1}  \right\rceil ,$$
no number $x$ greater than $\left\lfloor  \frac{s}{2}   \right\rfloor$ could be replaced by $\left\lfloor  \frac{s}{2}   \right\rfloor$.

\section{We cannot replace $s$ instead of $\left\lfloor  \frac{s}{2}   \right\rfloor$.}

In this section, our aim is showing that the inequality
$$\chi \left(  {\rm KG} ^r ({\cal F} , s)  \right) \geq \left\lceil  \frac{ {\rm ecd}^r \left(  {\cal F} , s \right) }{r-1}  \right\rceil $$
is not correct. If we put $r=2$, then the expression $\left\lceil  \frac{ {\rm ecd}^r \left(  {\cal F} , s \right) }{r-1}  \right\rceil $ will be equal to ${\rm ecd}^2 \left(  {\cal F} , s \right)$. Hence, in order to disprove 
$\chi \left(  {\rm KG} ^r ({\cal F} , s)  \right) \geq \left\lceil  \frac{ {\rm ecd}^r \left(  {\cal F} , s \right) }{r-1}  \right\rceil $, it is enough to find a hypergraph for which
$$\chi \left(  {\rm KG} ^2 ({\cal F} , s)  \right) < {\rm ecd}^2 \left(  {\cal F} , s \right) .$$
In this regard, we state and prove the following theorem, which is the first result of this paper.

\begin{theorem}
	For any two positive integers $l$ and $s$ with $l\geq 2$, there exists
	a hypergraph ${\cal F}$ such that
	$$\begin{array}{lcr}
		$$\chi \left(  {\rm KG} ^2 ({\cal F} , s)  \right) =l & {\rm and} & {\rm ecd}^2 \left(  {\cal F} , s \right) = l+s.$$
	\end{array}$$
\end{theorem}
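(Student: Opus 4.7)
The plan is to construct a very small explicit hypergraph that realises both equalities at once. Take $V({\cal F}) = [l+s]$ and let ${\cal F}$ be the $(s+1)$-uniform hypergraph whose $l$ hyperedges are
$$T_{j} \;=\; \{1,2,\ldots,s\}\cup\{s+j\} \qquad (j=1,2,\ldots,l).$$
Each hyperedge has size $s+1$, so the condition $s<|e|$ is satisfied for every $e\in E({\cal F})$, and ${\rm KG}^{2}({\cal F},s)$ is well defined.

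The chromatic number calculation is then immediate. For distinct indices $i,j\in[l]$ one has $T_{i}\cap T_{j}=\{1,\ldots,s\}$, whose cardinality is exactly $s$, so every pair of distinct hyperedges of ${\cal F}$ forms an edge of ${\rm KG}^{2}({\cal F},s)$. Hence ${\rm KG}^{2}({\cal F},s)\cong K_{l}$, giving $\chi({\rm KG}^{2}({\cal F},s))=l$.

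For the equitable colorability defect, I would argue as follows. Because $|T_{j}|=s+1$, the requirement $T_{j}\not\subseteq_{s}X_{i}$ amounts to $|T_{j}\setminus X_{i}|\geq s+1=|T_{j}|$, equivalently $T_{j}\cap X_{i}=\emptyset$. Enforcing this for every $j\in[l]$ and every $i\in\{1,2\}$ yields $X_{i}\cap\bigcup_{j=1}^{l}T_{j}=\emptyset$; but $\bigcup_{j=1}^{l}T_{j}=[l+s]=V({\cal F})$, so $X_{1}=X_{2}=\emptyset$ is forced, giving $X_{0}=V({\cal F})$ and $|X_{0}|=l+s$. Conversely, the partition $X_{0}=V({\cal F})$, $X_{1}=X_{2}=\emptyset$ is trivially equitable and satisfies the required property vacuously, so this lower bound is attained and ${\rm ecd}^{2}({\cal F},s)=l+s$.

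The only nontrivial step is locating the construction itself. Choosing hyperedges of the minimal admissible size $s+1$ makes the defect condition $|T_{j}\setminus X_{i}|>s$ collapse to the rigid disjointness requirement $T_{j}\cap X_{i}=\emptyset$, which drags the entire vertex set into $X_{0}$; simultaneously, arranging the $l$ hyperedges to share exactly $s$ common elements keeps ${\rm KG}^{2}({\cal F},s)$ as full as possible, namely $K_{l}$. Both numerics drop out of these two observations without any further case analysis or appeal to the earlier theorems of Dolnikov--K\v{r}\'{\i}\v{z} or Azarpendar--Jafari.
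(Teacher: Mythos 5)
Your proof is correct, and it takes a genuinely different and more elementary route than the paper. The paper starts from the complete hypergraph $K_{2n+k}^{n}$ with $k=l-2$ and appends a common set $S$ of $s$ fresh vertices to every hyperedge; the chromatic number there is inherited from the Lov\'asz--Kneser theorem $\chi\left({\rm KG}^{2}\left(K_{2n+k}^{n},0\right)\right)=k+2$, and the lower bound ${\rm ecd}^{2}({\cal F},s)\geq l+s$ needs a counting argument that extracts a ``bad'' hyperedge from a part $X_{1}$ of size at least $n$. Your sunflower ${\cal F}=\{T_{1},\dots,T_{l}\}$ with core $\{1,\dots,s\}$ and singleton petals avoids both pieces of machinery: since $|T_{i}\cap T_{j}|=s$ for $i\neq j$, the graph ${\rm KG}^{2}({\cal F},s)$ is exactly $K_{l}$, so $\chi=l$ needs no topological input; and since every hyperedge has the minimal admissible size $s+1$, the condition $T_{j}\nsubseteq_{s}X_{i}$ degenerates to $T_{j}\cap X_{i}=\varnothing$, which forces $X_{1}=X_{2}=\varnothing$ and hence $|X_{0}|=l+s$ for every admissible partition, equitable or not. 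This is shorter and entirely self-contained; what the paper's construction buys in exchange is a family that remains ``genuinely Kneser-like'' (parametrized by $n$, with the same gluing trick reused conceptually in their setting). One wording nitpick: for the partition $X_{0}=V({\cal F})$, $X_{1}=X_{2}=\varnothing$, the condition $e\nsubseteq_{s}X_{i}$ is not satisfied \emph{vacuously} --- it must be checked, and it holds because $|e\setminus\varnothing|=s+1>s$; this does not affect the argument.
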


\begin{proof}{
Put $k:=l-2$. Let $K_{2n+k}^{n}$ be the hypergraph with vertex set $[2n+k]$ and hyperedge set ${[2n+k] \choose n}$. So,
$$\chi \left(  {\rm KG}^2 \left(K_{2n+k}^n , 0 \right)  \right)  = k+2=l.$$
Now, let $S:=\{a_1 , a_2 , \dots , a_s\}$ be a set such that $S \cap [2n+k] = \varnothing $;
and then define a new hypergraph ${\cal F}$ with vertex set $V({\cal F}) := S \cup [2n+k]$
whose hyperedge set equals
$$E({\cal F}) := \left\{e \cup S : e \in E \left(K_{2n+k}^n  \right) \right\} = \left\{e \cup S : e \in {[2n+k] \choose n} \right\}.$$

Any two vertices $e_1$ and $e_2$ from ${\rm KG}^2 \left(K_{2n+k}^n , 0 \right)$
are adjacent iff their corresponding vertices in ${\rm KG}^2 \left( {\cal F} , s \right)$
are adjacent; that is, $\left\{ e_1, e_2 \right\} \in E \left(  {\rm KG}^2 \left(K_{2n+k}^n , 0 \right)  \right)$ if and only if $\left\{ e_1 \cup S , e_2 \cup S \right\} \in E \left(  {\rm KG}^2 \left( {\cal F} , s \right)  \right)$.
So, we observe that
$$\chi \left(  {\rm KG} ^2 ({\cal F} , s)  \right) = \chi \left(  {\rm KG}^2 \left(K_{2n+k}^n , 0 \right)  \right)  = k+2=l .$$
Now, we show that
${\rm ecd}^2 \left(  {\cal F} , s \right) = k+2+s = l+s.$
In this regard, our first objective is showing that
${\rm ecd}^2 \left(  {\cal F} , s \right) \leq k+2+s .$
Put
\begin{itemize}
	\item $Y_1 := [n-1] = \{1 , 2 , \dots , n-1\} ;$
	\item $Y_2 := [2n-2] \setminus [n-1] = \{n , n+1 , \dots , 2n-2\} ;$
	\item $Y_0 := \{2n-1 , 2n , 2n+1 , \dots , 2n+k\} \cup S .$
\end{itemize}
Obviously, $\left\{ Y_0, Y_1 , Y_2 \right\}$ is a partition of $V({\cal F})$ such that $\left| Y_1 \right| = \left| Y_2 \right| = n-1$. Also, if $e \in E({\cal F})$ and $i \in \{1,2\}$ then :
$$
	\left| e \setminus Y_i  \right|  =  \left| e \setminus \left( e \cap Y_i \right)  \right|  =  \left| e \right| - \left| e \cap Y_i  \right|  \geq  \left| e \right| - \left| Y_i  \right| \\
	 =  (s+n) - (n-1)  =  s+1   >  s;
$$
and therefore, $e \nsubseteq _s Y_i$.
We conclude that
${\rm ecd}^2 \left(  {\cal F} , s \right) \leq \left| Y_0  \right| = k+2+s .$

As our next task, we aim to prove that
${\rm ecd}^2 \left(  {\cal F} , s \right) \geq k+2+s .$
Suppose, on the contrary, that
${\rm ecd}^2 \left(  {\cal F} , s \right) \leq k+1+s .$
So, one can regard a partition $\left\{ X_0, X_1 , X_2 \right\}$ of $V({\cal F})$
with $\left| X_0 \right| = {\rm ecd}^2 \left(  {\cal F} , s \right) \leq k+1+s$ and
$\left| X_1 \right| \geq \left| X_2 \right|$ in such a way that
$e \nsubseteq _s X_1$ and $e \nsubseteq _s X_2$ for each hyperedge $e$ in $E({\cal F})$.
Hence,
$$
	2\left| X_1 \right|  \geq  \left| X_1 \right| + \left| X_2 \right|  =  |V({\cal F})|
	- \left| X_0 \right| 
	    \geq  (2n+k+s) - (k+1+s)  =  2n-1;
$$
and therefore, $$\left| X_1 \right| \geq \left\lceil  \frac{2n-1}{2}  \right\rceil = n .$$
Choose a subset $X'_1$ of $X_1$ such that $\left| X'_1 \right| = n $.
Define
$$\begin{array}{lcr}
	A:= X'_1 \cap [2n+k] & {\rm and}  & B:= X'_1 \cap S .
\end{array}$$
So, $|A| + |B| = \left| X'_1 \right| = n$.
Also, suppose that
$$\begin{array}{lcr}
	[2n+k] \setminus X'_1 = \left\{ i_1 , i_2 , \dots , i_{2n+k-|A|}\right\} &
	 {\rm and} & i_1 < i_2 < \dots < i_{2n+k-|A|}.
\end{array}$$

\noindent Now, $e:= X'_1 \cup \left\{ i_1 , i_2 , \dots , i_{n-|A|}\right\} \cup \left( S \setminus X'_1 \right)$
is a hyperedge of ${\cal F}$ that satisfies
$$\begin{array}[pos]{lllll}
	\left| e \setminus X_1  \right| & \leq & \left| e \setminus X'_1  \right| & = &  \left| \left\{ i_1 , i_2 , \dots , i_{n-|A|}\right\} \cup \left( S \setminus X'_1 \right)  \right|  \\
	    &  &  &  &  \\
	    &   &    &  =  &  \left| \left\{ i_1 , i_2 , \dots , i_{n-|A|}\right\}   \right| +   \left|   S \setminus X'_1   \right|  \\
	    &  &  &  &    \\
	    &   &   &  =  & n-|A| + \left| S \setminus \left( S \cap X'_1 \right)  \right| \\
	    &  &  &  &  \\
	    &   &   &  =  & n-|A| +   |S| - \left|  S \cap X'_1   \right|   \\     
	    &  &  &  &  \\
	    &   &  & = & n- |A| + |S| - |B|  \\
	    &  &  &   &  \\
	    &   &   & = & |S| + n - (|A| + |B|)    = |S| = s . \\
\end{array}$$
We conclude that $\left| e \setminus X_1  \right|  \leq s$,
a contradiction to the fact that $e \nsubseteq _s X_1$.
It follows that
${\rm ecd}^2 \left(  {\cal F} , s \right) \geq k+2+s .$
Therefore, ${\rm ecd}^2 \left(  {\cal F} , s \right) \geq k+2+s $
and ${\rm ecd}^2 \left(  {\cal F} , s \right) \leq k+2+s $
imply ${\rm ecd}^2 \left(  {\cal F} , s \right) = k+2+s = l + s $;
as desired.
}
\end{proof}

\section{A stronger result}

This section concerns with determining the set of values which can be replaced by
$\left\lfloor  \frac{s}{2}   \right\rfloor$
in the general inequality
$\chi \left(  {\rm KG} ^r ({\cal F} , s)  \right) \geq \left\lceil  \frac{ {\rm ecd}^r \left(  {\cal F} , \left\lfloor  \frac{s}{2}   \right\rfloor \right) }{r-1}  \right\rceil .$
Since
$${\rm ecd}^r \left(  {\cal F} , 0 \right) \leq
{\rm ecd}^r \left(  {\cal F} , 1 \right) \leq
\dots \leq
{\rm ecd}^r \left(  {\cal F} , \left\lfloor  \frac{s}{2}   \right\rfloor \right) \leq
\dots \leq
{\rm ecd}^r \left(  {\cal F} , s \right) ,
$$
one observes that each nonnegative integer which is less than or equal to
$\left\lfloor  \frac{s}{2}   \right\rfloor$,
could be replaced by $\left\lfloor  \frac{s}{2}   \right\rfloor$.
The aim of this section is showing that no number $x$ greater than $\left\lfloor  \frac{s}{2}   \right\rfloor$ could be replaced by $\left\lfloor  \frac{s}{2}   \right\rfloor$. In this regard, we consider the relation
${\rm ecd}^r \left(  {\cal F} , x \right) \geq {\rm cd}^r \left(  {\cal F} , x   \right)$ which always holds; and we show that even in the weaker inequality
$$\chi \left(  {\rm KG} ^r ({\cal F} , s)  \right) \geq \left\lceil  \frac{ {\rm cd}^r \left(  {\cal F} , \left\lfloor  \frac{s}{2}   \right\rfloor \right) }{r-1}  \right\rceil ,$$
no number $x$ greater than $\left\lfloor  \frac{s}{2}   \right\rfloor$ could be replaced by $\left\lfloor  \frac{s}{2}   \right\rfloor$.
We restrict our attention just to the case where $r=2$ and $s$ is an even positive integer.

\begin{theorem}
	Let $k \in \mathbb{N}$ and $s$ be an even positive integer. Then, there exists a hypergraph
	${\cal F}$ with $\chi \left(  {\rm KG} ^2 ({\cal F} , s)  \right) = k$
	in such a way that
	$${\rm cd}^2 \left(  {\cal F} , l \right) = {\rm ecd}^2 \left(  {\cal F} , l \right) =
	k (2l - s +1) $$
	for each $l$ in $\left\{\frac{s}{2} + 1 , \frac{s}{2} + 2 , \dots , s \right\}$.
\end{theorem}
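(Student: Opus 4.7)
The plan is to exhibit a very simple hypergraph $ {\cal F} $: take the disjoint union of $k$ blocks $G_1, G_2, \ldots, G_k$, each of size $s+1$, and let the hyperedge set be $E({\cal F}) = \{G_1, G_2, \ldots, G_k\}$. This is an $(s+1)$-uniform hypergraph on $k(s+1)$ vertices, amounting to $k$ pairwise disjoint copies of a single hyperedge.

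The chromatic number computation is then immediate. The Kneser graph ${\rm KG}^2({\cal F}, s)$ has $k$ vertices, namely $G_1, \ldots, G_k$, and any two distinct blocks satisfy $|G_i \cap G_j| = 0 \leq s$, so every pair is adjacent. Hence ${\rm KG}^2({\cal F}, s) \cong K_k$ and $\chi({\rm KG}^2({\cal F}, s)) = k$.

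For the defects, fix $l \in \{s/2+1, s/2+2, \ldots, s\}$ and consider any partition $\{X_0, X_1, X_2\}$ of $V({\cal F})$ with $G_i \nsubseteq_l X_j$ for every $i \in \{1, \ldots, k\}$ and $j \in \{1, 2\}$. Since $|G_i| = s+1$, the requirement $G_i \nsubseteq_l X_j$ is equivalent to $|G_i \cap X_j| \leq s - l$, and therefore
\[
|G_i \cap X_0| = (s+1) - |G_i \cap X_1| - |G_i \cap X_2| \geq (s+1) - 2(s-l) = 2l - s + 1.
\]
Summing over $i$ yields $|X_0| \geq k(2l-s+1)$, which proves ${\rm cd}^2({\cal F}, l) \geq k(2l-s+1)$. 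For a matching upper bound, observe that $s - l \in \{0, 1, \ldots, s/2 - 1\}$ is a non-negative integer (using that $s$ is even and $l \leq s$); within each block $G_i$ place $s - l$ vertices into $X_1$, another $s - l$ vertices into $X_2$, and the remaining $2l - s + 1$ vertices into $X_0$. This partition verifies all required non-inclusions, achieves $|X_0| = k(2l-s+1)$, and is perfectly balanced since $|X_1| = |X_2| = k(s-l)$. Hence it simultaneously witnesses ${\rm cd}^2({\cal F}, l) \leq k(2l-s+1)$ and ${\rm ecd}^2({\cal F}, l) \leq k(2l-s+1)$. Combined with the universal inequality ${\rm ecd}^2({\cal F}, l) \geq {\rm cd}^2({\cal F}, l)$, both defects must equal $k(2l-s+1)$, as desired.

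The construction is elementary and the arguments reduce to routine counting, so no single step is a real obstacle; the only care required is to verify the boundary cases $l = s/2+1$ (where $s-l = s/2-1 \geq 0$ because $s \geq 2$) and $l = s$ (where $s-l=0$, so $X_1$ and $X_2$ are empty and $X_0 = V({\cal F})$, which trivially satisfies the non-inclusion conditions).
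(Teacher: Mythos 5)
Your proposal is correct and uses essentially the same construction and counting as the paper: $k$ disjoint blocks of size $s+1$ as hyperedges, the bound $|G_i\cap X_j|\leq s-l$ for the lower bound, and a balanced partition placing $s-l$ vertices of each block into each of $X_1,X_2$ for the upper bound on ${\rm ecd}^2$. The only (welcome) difference is that you explicitly verify $\chi\left({\rm KG}^2({\cal F},s)\right)=k$ via ${\rm KG}^2({\cal F},s)\cong K_k$, a step the paper's proof leaves implicit.
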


\begin{proof}{
	Let us regard some pairwise disjoint sets
	$A_1 , A_2 , \dots , A_k$
	with
	$$\left| A_1 \right| = \left| A_2 \right| = \dots = \left| A_k \right| = s+1.$$
	Now, define a hypergraph ${\cal F}$ with
	$$\begin{array}{lcr}
		V ({\cal F}) := A_1 \cup A_2 \cup \dots \cup A_k  &  {\rm and}   &   E ({\cal F}) := \left\{   A_1 , A_2 , \dots , A_k \right\}.
	\end{array}$$
	Let $\left\{ X_0 , X_1 , X_2 \right\}$ be a partition of $V ({\cal F})$ that satisfies the following two properties :
	\begin{itemize}
		\item $\left| X_0  \right| = {\rm cd}^2 \left(  {\cal F} , l \right) ,$
		\item If $A_i \in \left\{   A_1 , A_2 , \dots , A_k \right\}$ and $X_j \in \left\{ X_1 , X_2 \right\}$ , then $A_i \nsubseteq _l X_j$ .
	\end{itemize}
	We aim to show that $\left| X_0 \right| \geq k(2l-s+1)$.
	
	\noindent Let $i \in \{1,2, \dots , k\}$ and $j \in \{1,2\}$.
	Since $A_i \nsubseteq _l X_j$, we must have
	$$\left|  A_i \setminus X_j\right| \geq l+1 .$$ Hence,
	$$\left|  A_i \cap X_j\right| = \left|  A_i \right| - \left|  A_i \setminus X_j\right| = (s+1) - \left|  A_i \setminus X_j\right| \leq (s+1) - (l+1) = s-l .$$
	Thus, $\left|  A_i \cap X_j\right|  \leq s-l $ for each $i$ in $\{1,2, \dots , k\}$ and each $j$ in $\{1,2\}$. Hence,
	$$\left| X_1 \right| = \left| X_1 \cap V({\cal F}) \right| =  \left| X_1 \bigcap \left( \bigcup_{i=1}^k A_i \right) \right| \leq \sum _{i=1} ^k  \left| X_1 \cap A_i \right| \leq  \sum _{i=1} ^k  (s-l) = k(s-l) .$$
	Similarly, $\left| X_2 \right| \leq  k(s-l) .$
	We conclude that
	$$\left| X_1 \cup X_2 \right| = \left| X_1 \right| + \left| X_2 \right| \leq 2k(s-l) . $$
	Therefore,
	$$\left| X_0 \right| = \left| V({\cal F}) \right| - \left| X_1 \cup X_2 \right| = k(s+1) - \left| X_1 \cup X_2 \right| \geq k(s+1) - 2k(s-l) = k(2l-s+1) . $$
	We conclude that
	$${\rm cd}^2 \left(  {\cal F} , l \right) \geq k (2l - s +1) .$$
	Now, we claim that ${\rm ecd}^2 \left(  {\cal F} , l \right) \leq k (2l - s +1) .$
	In this regard, for each $i$ in $\{1, \dots , k\}$, let $A_{i_1}$ and $A_{i_2}$
	be two disjoint subsets of $A_i$, each of size $s-l$.
	More precisely,
	$$\begin{array}{ccccc}
		A_{i_1} \cup A_{i_2}  \subseteq A_i   &  {\rm and}   &  A_{i_1} \cap A_{i_2} = \varnothing  ,  &  {\rm and \ also,}  &  \left| A_{i_1}  \right| = \left| A_{i_2}  \right| = s-l .
	\end{array}$$
	Now, define a partition $\left\{  Y_0 , Y_1 , Y_2 \right\}$ of $V({\cal F})$ as follows :
	$$Y_1 := \bigcup_{i=1} ^k A_{i_1}  \ \   {\rm and}  \ \  Y_2 := \bigcup _{i=1} ^k A_{i_2}  \ \  {\rm and}  \ \   Y_0 := V({\cal F}) \setminus \left( Y_1 \cup Y_2 \right)  . $$
	We have $Y_1 \cap Y_2 = \varnothing$ and $\left| Y_1 \right| = \left| Y_2 \right| = k(s-l)$. Also,
	$$\left| Y_0 \right| = \left| V({\cal F}) \right| - \left| Y_1 \cup Y_2 \right| =
	k(s+1) - 2k(s-l) = k(2l-s+1) .$$
	Also, if $A_i \in \left\{ A_1 , A_2 , \dots , A_k \right\}$ then because of
	$A_i \cap Y_1 = A_{i_1}$
	we have
	$$\left| A_i \setminus Y_1 \right| = \left| A_i \setminus \left( A_i \cap Y_1 \right) \right| =  
	\left| A_i \right| - \left| A_{i_1} \right| =
	(s+1) - (s-l) = l+1 .$$
	Thus, $\left| A_i \setminus Y_1 \right| = l+1$. Hence, $\left| A_i \setminus Y_1 \right| \nleq l$; and therefore, $A_i \nsubseteq_l Y_1$.
	Similarly, we have $A_i \nsubseteq_l Y_2$.
	
	\noindent It follows that $\left\{  Y_0 , Y_1 , Y_2 \right\}$ is a partition of $V({\cal F})$ that satisfies the following three properties :
	\begin{itemize}
		\item $\left| Y_0 \right| = k(2l-s+1) $;
		\item $\left| Y_1 \right| = \left| Y_2 \right| $;
		\item $e \nsubseteq_l Y_1$ and $e \nsubseteq_l Y_2$ for each hyperedge $e$ of $V({\cal F})$.
	\end{itemize}
	So, ${\rm ecd}^2 \left(  {\cal F} , l \right) \leq \left| Y_0 \right| = k(2l-s+1)$; and therefore, ${\rm ecd}^2 \left(  {\cal F} , l \right) \leq k(2l-s+1)$; as claimed.

    We conclude that $k(2l-s+1) \leq {\rm cd}^2 \left(  {\cal F} , l \right) \leq {\rm ecd}^2 \left(  {\cal F} , l \right) \leq k(2l-s+1)$; which implies
    $$ {\rm cd}^2 \left(  {\cal F} , l \right) = {\rm ecd}^2 \left(  {\cal F} , l \right) = k(2l-s+1) ;$$
    which is desired.	
}	
\end{proof}

\bibliographystyle{plain}
\def\cprime{$'$} \def\cprime{$'$}

\end{document}